\newcommand{\maps}{\colon}
\newcommand{\tensor}{\otimes}
\newcommand{\htensor}{\widehat{\otimes}}
\newcommand{\pr}{\mathrm{pr}}
\newcommand{\bs}{\mathbf{s}}
\newcommand{\xto}[1]{\xrightarrow{#1}}
\newcommand{\Z}{\mathbb{Z}}
\newcommand{\cF}{\mathcal{F}}
\newcommand{\cC}{\mathcal{C}}
\newcommand{\fF}{\mathfrak{F}}
\newcommand{\fib}{\twoheadrightarrow}
\newcommand{\trivfib}{\overset{\sim}{\twoheadrightarrow}}
\newcommand{\weq}{\xrightarrow{\sim}}
\newcommand{\del}{\partial}
\newcommand{\bul}{\bullet}
\newcommand{\und}[1]{{\underline{#1}}}
\newcommand{\ov}[1]{{\overline{#1}}}
\newcommand{\ti}[1]{{\tilde{#1}}}
\newcommand{\curv}{{\mathsf{curv}}}
\newcommand{\horn}[2]{\Lambda^{#1}_{#2}}
\DeclareMathOperator{\MC}{\mathrm{MC}}
\DeclareMathOperator{\sMC}{\mathfrak{MC}}
\DeclareMathOperator{\FLie}{\widehat{\mathsf{Lie}}_\infty}
\DeclareMathOperator{\Kan}{\mathsf{Kan}}
\DeclareMathOperator{\Hom}{\mathrm{Hom}}
\DeclareMathOperator{\End}{\mathrm{End}}
\DeclareMathOperator{\tow}{\mathsf{tow}}
\DeclareMathOperator{\sSet}{\mathsf{sSet}}
\DeclareMathOperator{\Cobar}{\mathrm{Cobar}}
\DeclareMathOperator{\Cyl}{\mathrm{Cyl}}
\DeclareMathOperator{\Conv}{\mathrm{Conv}}
\begin{document}
\title*{Homotopical properties of the simplicial Maurer--Cartan functor}

\author{Christopher L.\  Rogers}
\institute{
 Department of Mathematics and Statistics, University of Nevada,
 Reno. 1664 N. Virginia Street Reno, NV 89557-0084 USA, 
\email{chrisrogers@unr.edu, chris.rogers.math@gmail.com}}

\maketitle

\abstract{
  We consider the category whose objects are filtered, or complete,
  $L_\infty$-algebras and whose morphisms are $\infty$-morphisms which
  respect the filtrations. We then discuss the homotopical properties
  of the Getzler--Hinich simplicial Maurer--Cartan functor which
  associates to each filtered $L_\infty$-algebra a Kan simplicial set,
  or $\infty$-groupoid. In previous work with V.\ Dolgushev, we showed
  that this functor sends weak equivalences of filtered
  $L_\infty$-algebras to weak homotopy equivalences of simplicial
  sets. Here we sketch a proof of the fact that this functor also
  sends fibrations to Kan fibrations. To the best of our knowledge, 
  only special cases of this result have previously appeared in the literature.
  As an application, we show how these
  facts concerning the simplicial Maurer--Cartan functor
  provide a simple $\infty$-categorical formulation of
  the Homotopy Transfer Theorem.
}

\section{Introduction}
Over the last few years, there has been increasing interest in the
homotopy theory of filtered, or complete,
$L_\infty$-algebras\footnote{ Throughout this paper, all algebraic
  structures have underlying $\Z$ graded $\Bbbk$-vector spaces with
  char $\Bbbk =0$.} and the role these objects play in deformation
theory \cite{Getzler,Hinich}, rational homotopy theory
\cite{Berglund,Buijs,Lazarev}, and the homotopy theory of homotopy
algebras \cite{HAforms,Enhanced,Dotsenko}. One important tool used in these applications is the
simplicial Maurer--Cartan functor $\sMC_{\bul}(-)$ which produces from any
filtered $L_\infty$-algebra a Kan simplicial set, or
$\infty$-groupoid. This construction, first appearing in the work of
V.\ Hinich \cite{Hinich} and E.\ Getzler \cite{Getzler}, can (roughly) be thought of as a ``non-abelian
analog'' of the Dold--Kan functor from chain complexes to simplicial
vector spaces.  In deformation theory, these $\infty$-groupoids give
higher analogs of the Deligne groupoid. In rational homotopy theory,
this functor generalizes the Sullivan realization functor, and has
been used to study rational models of mapping spaces.

A convenient presentation of the homotopy theory of filtered
$L_\infty$-algebras has yet to appear in the literature. But based on
applications, there are good candidates for what the weak equivalences
and fibrations should be between such objects. One would also hope that the
simplicial Maurer--Cartan functor sends these morphisms to weak
homotopy equivalences and Kan fibrations, respectively. For various special cases, which are
recalled in Section \ref{sec:sMC}, it is known that this is indeed
true. In joint work with V.\ Dolgushev \cite{GM_Theorem}, we
showed that, in general, $\sMC_{\bul}(-)$ maps any weak equivalence of
filtered $L_\infty$-algebras to a weak equivalence of Kan
complexes. This can be thought of as the natural $L_\infty$
generalization of the Goldman--Millson theorem in deformation
theory.

The purpose of this note is to sketch a proof of the analogous result for
fibrations (Thm.\ \ref{thm:fib} in Sec.\ \ref{sec:sMC} below): 
The simplicial Maurer--Cartan functor maps any fibration
between any filtered $L_\infty$-algebras to a fibration between their
corresponding Kan complexes. Our proof is not a simple generalization
of the special cases already found in the literature, nor does it
follow directly from general abstract homotopy theory.
It requires some technical calculations involving Maurer--Cartan
elements, similar to those found in our previous work
\cite{GM_Theorem}.  

As an application, we show in Sec.\ \ref{sec:HTT} that
``$\infty$-categorical'' analogs of the existence and uniqueness
statements that comprise the Homotopy Transfer Theorem
\cite{BM,Berglund2,LV,Markl} follow as a corollary of our Theorem \ref{thm:fib}. 
In more detail, suppose we are given a cochain complex $A$, a
homotopy algebra $B$ of some particular type (e.g, an $A_\infty$,
$L_\infty$, or $C_\infty$-algebra) and a quasi-isomorphism of
complexes $\phi \maps A \to B$.  Then, using the simplicial
Maurer--Cartan functor, we can naturally produce an $\infty$-groupoid
$\fF$ whose objects correspond to solutions to the ``homotopy transfer
problem''. By a solution, we mean a pair consisting of a homotopy
algebra structure on $A$, and a lift of $\phi$ to a
$\infty$-quasi-isomorphism of homotopy algebras $A \weq B$. The fact
that $\sMC_{\bul}(-)$ preserves both weak equivalences and fibrations allows
us to conclude that: (1) The $\infty$-groupoid $\fF$ is non--empty,
and (2) it is contractible. In other words, a homotopy equivalent
transferred structure always exists, and this structure is unique in the strongest possible sense.

\section{Preliminaries} \label{sec:prelim}

\subsection{Filtered $L_\infty$-algebras} \label{sec:FLie}
In order to match conventions in our previous work \cite{GM_Theorem}, we define an
{\bf $L_\infty$-algebra} to be a cochain complex $(L,\del)$ for
which the reduced cocommutative coalgebra $\und{S}(L)$ is equipped with a
degree 1 coderivation $Q$ such that $Q(x)=\del x$ for all $x \in L$
and $Q^2=0$. This structure is equivalent to specifying a sequence of
\underline{degree one} multi-brackets
\begin{equation}
\label{m-bracket}
\{~,~, \dots, ~\}_m : S^m(L) \to L \quad m \geq 2
\end{equation}
satisfying compatibility conditions with the differential $\del$ and
higher--order Jacobi-like identities. (See Eq.\ 2.5. in \cite{GM_Theorem}.)
More precisely, if $\pr_L \maps \und{S}(L) \to L$ denotes the usual
projection, then
\[
\{x_1, x_2, \dots, x_m\}_m = \pr_L  Q ( x_1 x_2 \dots x_m)\,, \qquad \forall x_j \in L\,.
\]
This definition of $L_\infty$-algebra is a ``shifted version'' of the
original definition of $L_\infty$-algebra. A shifted $L_\infty$-structure
on $L$ is equivalent to a traditional $L_\infty$-structure on $\bs L$,
the suspension of $L$.

A \textbf{morphism} (or $\infty$-morphism) $\Phi$ from an
$L_\infty$-algebra $(L,Q)$ to an $L_\infty$-algebra $(\ti{L},\ti{Q})$
is a dg coalgebra morphism
\begin{equation} \label{eq:morphism} \Phi \maps \bigl( \und{S}(L),Q
  \bigr) \to \bigl( \und{S}(\ti{L}), \ti{Q} \bigr).
\end{equation}
Such a morphism $\Phi$ is uniquely determined by its composition with
the projection to $\ti{L}$:
\[
\Phi' := \pr_{\ti{L}} \Phi.
\]
Every such dg coalgebra morphism induces a map of cochain complexes, e.g., the linear term of $\Phi$:
\begin{equation} \label{eq:tangent} \phi:= \pr_{\ti{L}} \Phi \vert_{L}
  \maps (L,\del) \to (\ti{L},\ti{\del}),
\end{equation}
and we say $\Phi$ is \textbf{strict} iff it consists only of a linear term, i.e.
\begin{equation} \label{eq:strict}
\Phi'(x)= \phi(x) \qquad \Phi'(x_1,\ldots,x_m)=0 \quad \forall m \geq 2
\end{equation}

A morphism $\Phi \maps (L,Q) \to (\ti{L},\ti{Q})$ of
$L_\infty$-algebras is an \textbf{$\infty$-quasi-isomorphism} iff
$\phi \maps (L,\del) \to (\ti{L},\ti{\del})$ is a quasi--isomorphism
of cochain complexes.

We say an $L_\infty$-algebra $(L,Q)$ is a \textbf{filtered $L_\infty$-algebra} iff 
the underlying cochain complex $(L, \del)$ is equipped with a complete 
descending filtration,
\begin{equation}
\label{filtr-L}
L = \cF_{1}L \supset \cF_{2}L \supset  \cF_{3}L  \cdots
\end{equation}
\begin{equation}
\label{L-complete}
L =\varprojlim_{k} L/\cF_{k}L\,,
\end{equation}
which is compatible with the brackets, i.e.
\[
\Bigl \{\cF_{i_{1}}L,\cF_{i_{2}}L,\ldots,\cF_{i_{m}}L \Bigr \}_m \subseteq
\cF_{i_{1} + i_{2} + \cdots + i_{m}} L \quad \forall ~~ m >1.
\]
A filtered $L_\infty$-algebra in our sense is a shifted analog of a 
``complete'' $L_\infty$-algebra, in the sense of A.\ Berglund \cite[Def.\ 5.1]{Berglund}.

\begin{remark} \label{rmk:nilpot}
Due to its compatibility with the filtration,
the $L_\infty$-structure on $L$ induces a filtered $L_\infty$-structure on the
quotient $L/\cF_n L$. In particular, $L/\cF_nL$
is a \textbf{nilpotent $L_\infty$-algebra} \cite[Def.\ 2.1]{Berglund}
\cite[Def.\ 4.2]{Getzler}. Moreover, when the induced $L_\infty$-structure is
restricted to the sub-cochain complex 
\[
\cF_{n-1} L/\cF_{n} L \subseteq L/\cF_{n} L 
\]
all brackets of arity $\geq 2$ vanish. Hence, the nilpotent $L_\infty$-algebra  $\cF_{n-1} L/\cF_{n} L$
is an \textbf{abelian $L_\infty$-algebra}.
\end{remark}

\begin{definition}
  We denote by $\FLie$ the category whose objects are filtered
  $L_\infty$-algebras and whose morphisms are $\infty$-morphisms $\Phi
  \maps (L,Q) \to (\ti{L},\ti{Q})$ which are compatible with the
  filtrations:
\begin{equation}
\label{eq:fmorph}
\Phi'( \cF_{i_1} L \otimes  \cF_{i_2} L \otimes \dots \otimes  \cF_{i_m} L) \subset \cF_{i_1 + i_2 + \dots + i_m} \ti{L}\,,
\end{equation}

\end{definition}

\begin{definition}\label{def:weq}
Let $\Phi \maps (L,Q) \to (\ti{L},\ti{Q})$ be a morphism in $\FLie$.
\begin{enumerate}

\item We say $\Phi$ is a \textbf{weak equivalence} iff its linear term $\phi \maps (L, \del) \to (\ti{L},\ti{\del})$ induces a quasi--isomorphism of cochain complexes
\[
\phi \vert_{\cF_n L} \maps (\cF_{n}L, \del) \to (\cF_n \ti{L}, \ti{\del}) \qquad \forall n \geq 1.
\]

\item We say $\Phi$ is a \textbf{fibration} iff its linear term $\phi \maps (L, \del) \to (\ti{L},\ti{\del})$ induces a surjective map of cochain complexes
\[
\phi \vert_{\cF_n L} \maps (\cF_{n}L, \del) \to (\cF_n \ti{L}, \ti{\del}) \qquad \forall n \geq 1.
\]

\item We say $\Phi$ is an \textbf{acyclic fibration} iff $\Phi$ is both a weak equivalence and a fibration.

\end{enumerate}
\end{definition}

\begin{remark} \label{rmk:ses}
If $(L,Q)$ is a filtered $L_\infty$-algebra, then for
each $n \geq 1$, we have the obvious short exact sequence of cochain
complexes
\begin{equation} \label{eq:ses}
0 \to \cF_{n-1}L/\cF_n L \xto{i_{n-1}} L / \cF_{n}L \xto{p_n}  L / \cF_{n-1}L \to 0.
\end{equation}
It is easy to see that \eqref{eq:ses} lifts to a sequence of filtered
$L_\infty$-algebras, in which all of the algebras in the sequence are nilpotent $L_\infty$-algebras
(see Remark \ref{rmk:nilpot}), and in which all of the morphisms in the
sequence are strict. In particular, the morphism $L / \cF_{n}L
\xto{p_n}  L / \cF_{n-1}L$ is a fibration.

\end{remark}

\subsection{Maurer--Cartan elements} \label{sec:MC}
Our reference for this section is Section 2 of \cite{Enhanced}. We refer the reader there for 
details. Let $L$ be a filtered $L_\infty$-algebra. Since $L=\cF_1L$, the compatibility of the
multi--brackets with the filtrations gives us
well defined map of sets $\curv \maps
L^0 \to L^1$: 
\begin{equation} \label{eq:curv}
\curv(\alpha)=\del \alpha + \sum_{m \geq 1} \frac{1}{m!}
\{\alpha^{\tensor m}\}_m.
\end{equation}
Elements of the set
\[
\MC(L):= \{ \alpha \in L^0 ~ \vert ~ \curv(\alpha)=0 \} 
\]
are called the \textbf{Maurer--Cartan (MC) elements} of $L$. Note that
MC elements of $L$ are elements of degree 0. Furthermore, if
$\Phi \maps (L,Q) \to (\ti{L},\ti{Q})$ is a morphism in $\FLie$ then
the compatibility of $\Phi$ with the filtrations allows us to define a
map of sets
\begin{equation}\label{eq:MC_phi}
\begin{split}
\Phi_\ast & \maps \MC(L)  \to \MC(\ti{L})\\
\Phi_\ast(\alpha) &:= \sum_{m \geq 2} \frac{1}{m!} \Phi'(\alpha^{\tensor
  m}).
\end{split}
\end{equation}
The fact that $\curv(\Phi(\alpha))=0$ is proved in \cite[Prop.\ 2.2]{Enhanced}.

Given an MC element $\alpha \in \MC(L)$, we can ``twist'' the
$L_\infty$-structure on $L$, to obtain a new filtered
$L_\infty$-algebra $L^\alpha$.  As a graded vector space with a
filtration, $L^{\alpha} = L$; the differential $\del^{\alpha}$ and the
multi-brackets $\{~,\dots, ~ \}^{\alpha}_m$ on $L^{\alpha}$ are defined by
the formulas
\begin{equation}
\label{eq:diff-twisted}
\del^{\alpha}(v) : = \del (v)  + \sum_{k=1}^{\infty} \frac{1}{k!} \{\alpha, \dots, \alpha, v\}_{k+1}\,,
\end{equation}
\begin{equation}
\label{eq:Linft_twisted}
\{v_1,v_2,\cdots, v_m \}^{\alpha}_m : =
\sum_{k=0}^{\infty} \frac{1}{k!} \{\alpha, \dots, \alpha, 
v_1, v_2,\cdots, v_m \}_{k+m}\,.
\end{equation}

\subsection{Getzler--Hinich construction} \label{sec:GH}
The MC elements of $(L,Q)$ are in fact the vertices of a simplicial
set. Let $\Omega_n$ denote the de Rham-Sullivan algebra of polynomial 
differential forms on the geometric simplex $\Delta^n$ with 
coefficients in $\Bbbk$.  The simplicial set $\sMC_\bul(L)$ is
 defined as
\begin{equation}
\label{int-mMC}
\sMC_n(L) : = \MC(L \htensor \Omega_n)
\end{equation}
where $L \htensor \Omega_n$ is the filtered $L_\infty$-algebra
defined as the projective limit of nilpotent $L_\infty$-algebras
\[
L \htensor \Omega_n : = \varprojlim_{k} \big( (L/\cF_{k}L) \otimes \Omega_n \big).
\]
Recall that the $L_\infty$-structure on the tensor product of chain complexes 
$(L/\cF_{k}L) \otimes \Omega_n$ is induced by the structure on
$L/\cF_{k}L$, and is well--defined since $\Omega_n$ is a commutative
algebra. For example:
\[
\{\bar{x}_1 \tensor \omega_1, \bar{x}_2 \tensor \omega_2, 
\ldots,\bar{x}_l \tensor \omega_l\}:= \pm  
\{\bar{x}_1, \bar{x}_2, \ldots,\bar{x}_l \}\tensor
\omega_1\omega_2\cdots \omega_l.
\]
Proposition 4.1 of \cite{Enhanced} implies that the simplicial set
$\sMC_{\bul}(L)$ is a Kan complex, or $\infty$-groupoid, which is
sometimes referred to as the ``Deligne--Getzler--Hinich''
$\infty$-groupoid of $L$.

Any morphism $\Phi \maps L \to \ti{L}$ in $\FLie$ induces a morphism 
$\Phi^{(n)} \maps L \htensor \Omega_n \to \ti{L}\htensor \Omega_n$ for
each $n \geq 0$ in the obvious way:
\begin{equation} \label{eq:phi-n}
\Phi^{(n)}(x_1 \tensor \theta_1,x_2 \tensor \theta_2, \hdots, x_m
\tensor \theta_m):= \pm \Phi(x_1,x_2,\ldots,x_m)\tensor \theta_1
\theta_2 \cdots \theta_m.
\end{equation}
This then gives us a map of MC sets $\Phi^{(n)}_\ast \maps 
\MC(L \htensor \Omega_n) \to \MC (\ti{L} \htensor \Omega_n)$ defined
via Eq.\ \ref{eq:MC_phi}. 
It is easy to see that $\Phi^{(n)}$ is compatible with the face and
degeneracy maps, which leads us to the \textbf{simplicial
  Maurer--Cartan functor}
\begin{equation} \label{eq:MCfunctor}
\begin{split}
\sMC_{\bul} &\maps \FLie \to \Kan\\
\sMC_{\bul} \Bigl ( L \xto{\Phi} \ti{L} \Bigr) &:= \sMC_\bul(L)
\xto{\Phi^{\bul}_\ast} \sMC_\bul(\ti{L})
\end{split}
\end{equation}

\section{The functor $\sMC_\bul(-)$ preserves weak equivalences and
  fibrations} \label{sec:sMC}

Our first observation concerning the simplicial Maurer--Cartan functor
is that it sends a weak equivalence $\Phi \maps L \weq \ti{L}$ in
$\FLie$ to a weak homotopy equivalence.  
For the special case in which
$\Phi$ is a \textit{strict} quasi-isomorphism between (shifted) \textit{dg Lie algebras}, V.\ Hinich \cite{Hinich} showed that $\sMC_{\bul}(\Phi)$ is a weak
equivalence. If $\Phi$ happens to be a \textit{strict}
quasi-isomorphism between \textit{nilpotent} $L_\infty$-algebras, then 
E.\ Getzler \cite{Getzler} showed that $\sMC_{\bul}(\Phi)$ is a weak equivalence. 
The result for the general case of $\infty$-quasi-isomorphisms between filtered
$L_\infty$-algebras was proved in our previous work with V.\ Dolgushev.

\begin{theorem}\cite[Thm.\ 1.1]{GM_Theorem} \label{thm:GM} 
If $\Phi \maps (L,Q) \weq
  (\ti{L},\ti{Q})$ is a weak equivalence of filtered
  $L_\infty$-algebras, then
\[
\sMC_{\bul}(\Phi) \maps \sMC_{\bul}(L) \to \sMC_{\bul}(\ti{L})
\]
is a homotopy equivalence of simplicial sets.
\end{theorem}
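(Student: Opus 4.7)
The plan is to reduce the statement to the case of nilpotent $L_\infty$-algebras using the completeness of the filtration, and then to handle the nilpotent case by induction along the filtration layers, taking advantage of the fact that the subquotients are abelian.

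For the reduction, one first checks that $\sMC_\bul(-)$ commutes with the inverse limit defining $L \htensor \Omega_n$, so that $\sMC_\bul(L) \cong \varprojlim_{k} \sMC_\bul(L/\cF_k L)$, and similarly for $\ti L$. By Remark \ref{rmk:ses}, each tower map $L/\cF_k L \fib L/\cF_{k-1} L$ is a strict surjection between nilpotent $L_\infty$-algebras, and the nilpotent strict case of the Getzler--Hinich construction is known to send such morphisms to Kan fibrations. Hence $\sMC_\bul(\Phi)$ is realized as a map between towers of Kan fibrations, and a standard Milnor-type argument reduces the problem to showing that $\sMC_\bul$ sends weak equivalences of nilpotent $L_\infty$-algebras to weak equivalences of Kan complexes.

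For the nilpotent case, induct on the least integer $n$ with $\cF_n L = 0$. The base case is when $L$ is abelian: all higher brackets vanish, so $\sMC_\bul(L)$ becomes the simplicial abelian group of degree-zero cocycles in $L \tensor \Omega_\bullet$, and a standard Dold--Kan-style argument shows that it sends quasi-isomorphisms to weak equivalences. For the inductive step, apply $\sMC_\bul$ to the short exact sequences \eqref{eq:ses} for both $L$ and $\ti L$, producing a commutative ladder whose rows have the form
\begin{equation*}
\sMC_\bul(\cF_{n-1}L/\cF_n L) \to \sMC_\bul(L/\cF_n L) \to \sMC_\bul(L/\cF_{n-1}L).
\end{equation*}
The vertical map between the two leftmost (subquotient) terms is a weak equivalence by the abelian base case applied to the restricted quasi-isomorphism, and the vertical map between the two rightmost terms is a weak equivalence by the inductive hypothesis.

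The hard part is to upgrade these two conclusions to a weak equivalence in the middle via the five lemma on long exact sequences of homotopy groups. The difficulty is that the fiber of $\sMC_\bul(L/\cF_n L) \to \sMC_\bul(L/\cF_{n-1}L)$ depends a priori on the chosen basepoint, and $\Phi$ need not be strict, so its induced action on fibers is not immediate. Following the Maurer--Cartan calculations of \cite{GM_Theorem}, one identifies the fiber over any vertex $\alpha$ with $\sMC_\bul$ of the twisted algebra $(\cF_{n-1}L/\cF_n L)^{\wt\alpha}$ for a lift $\wt\alpha$ of $\alpha$. Because the subquotient is abelian (Remark \ref{rmk:nilpot}), the twist formulas \eqref{eq:diff-twisted} and \eqref{eq:Linft_twisted} introduce no change, so the fiber is canonically independent of the basepoint. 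A compatibility check for $\Phi_\ast$ under twisting then supplies the hypotheses of the five lemma and closes the induction.
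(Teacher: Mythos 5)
Your overall framework---pass to the tower of nilpotent quotients and then induct on the filtration using that $\cF_{n-1}L/\cF_{n}L$ is abelian---is reasonable, and it is close in spirit to the proof in \cite{GM_Theorem}, which this paper cites rather than reproves. The reduction step is fine: a weak equivalence induces quasi-isomorphisms on all subquotients $\cF_{m}L/\cF_{n}L$, both towers consist of Kan complexes and Kan fibrations by Getzler's results, and $\varprojlim$ preserves weak equivalences between towers of fibrations. Your observation that twisting by a lift of a base vertex does not change the structure on $\cF_{n-1}L/\cF_{n}L$ is also correct, for filtration-degree reasons (any bracket with one entry in $\cF_{n-1}$ and at least one other entry lands in $\cF_{n}$), and for the same reasons the higher Taylor components of $\ov{\Phi}$ drop out on the fiber, so the induced map of fibers is the strict map coming from $\ov{\cF_{n-1}\Phi}$.

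The genuine gap is at $\pi_0$ in the inductive step. The long exact sequence of the fibration $p^{(\bul)}_{n\ast}$ ends in pointed sets, exactness there gives no control on surjectivity of $\pi_0(\sMC_{\bul}(L/\cF_nL)) \to \pi_0(\sMC_{\bul}(L/\cF_{n-1}L))$, and the five lemma simply does not reach $\pi_0$. Two things must be proved that your proposal does not address: (i) a component of the base lifts to the total space iff an obstruction class in $H^{1}(\cF_{n-1}L/\cF_{n}L)$ vanishes, and one must check that this class depends only on the component of the base vertex and is natural with respect to the \emph{non-strict} morphism $\ov{\Phi}$, in order to transfer surjectivity of $\pi_0$ upstairs; (ii) the set of components of the total space lying over a fixed base component is a quotient of $H^{0}(\cF_{n-1}L/\cF_{n}L)$ (the fibration is principal under the simplicial abelian group $\sMC_{\bul}(\cF_{n-1}L/\cF_{n}L)$) by an action involving $\pi_1$ of the base, and injectivity on $\pi_0$ requires comparing these actions for $L$ and $\ti{L}$. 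Relatedly, ``the fiber is canonically independent of the basepoint'' is not quite right: fibers over vertices not in the image are empty, and nonempty fibers are torsors, which is exactly what (i) and (ii) must exploit. This $\pi_0$ analysis is precisely what the paper flags as the most subtle part of the proof of Theorem \ref{thm:GM} (in \cite{GM_Theorem} the higher homotopy groups are handled via the identification $\pi_k(\sMC_{\bul}(L),\alpha)\cong H^{-k}(L^{\alpha})$, and the bulk of the work goes into the bijection on $\pi_0$); as written, your induction establishes the isomorphisms on $\pi_k$, $k\geq 1$, at every basepoint, but not the bijection on components.
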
 
It is interesting that the most subtle part of the proof of the above theorem involves establishing the bijection between $\pi_0 (\sMC_{\bul}(L))$ and $\pi_0( \sMC_{\bul}(\ti{L}))$.

The second noteworthy observation is that if $\Phi \maps L \fib \ti{L}$ is a fibration
then $\sMC_{\bul}(\Phi)$ is a Kan fibration.
To the best of our knowledge, this result, at this level of generality, is new.

\begin{theorem} \label{thm:fib} 
If $\Phi \maps (L,Q) \to
  (\ti{L},\ti{Q})$ is a fibration of filtered
  $L_\infty$-algebras, then
\[
\sMC_{\bul}(\Phi) \maps \sMC_{\bul}(L) \to \sMC_{\bul}(\ti{L})
\]
is a fibration of simplicial sets.
\end{theorem}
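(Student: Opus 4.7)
The plan is to prove Theorem~\ref{thm:fib} in two stages: first, a reduction from general filtered $L_\infty$-algebras to a specific type of strict fibration between nilpotent algebras, and then a direct horn-lifting argument in that reduced setting. For the reduction, observe that since $L = \varprojlim_k L/\cF_k L$ and the functor $\sMC_\bullet$ commutes with sequential inverse limits, $\sMC_\bullet(\Phi) = \varprojlim_k \sMC_\bullet(\Phi_k)$, where $\Phi_k \colon L/\cF_k L \to \tilde L/\cF_k \tilde L$ is the induced map on nilpotent quotients. Because a sequential inverse limit of a tower of Kan fibrations along Kan fibrations is itself a Kan fibration, it suffices to show that both $\sMC_\bullet(\Phi_k)$ and the tower structure maps $\sMC_\bullet(L/\cF_k L) \to \sMC_\bullet(L/\cF_{k-1} L)$ are Kan fibrations. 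Using the short exact sequences of Remark~\ref{rmk:ses} and induction on $k$ (with trivial base case $L/\cF_1 L = 0$), both tasks collapse to a single key claim: \emph{for any strict fibration $\psi \colon A \twoheadrightarrow B$ of nilpotent $L_\infty$-algebras whose kernel $K$ is an abelian $L_\infty$-algebra, $\sMC_\bullet(\psi)$ is a Kan fibration.}

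To prove this key claim, I would attack the horn-filling problem head-on. Given a horn $\alpha \colon \horn{n}{k} \to \sMC_\bullet(A)$ together with a filler $\tilde\beta \in \sMC_n(B)$ of $\psi^{(n)}_\ast \alpha$, the goal is to produce $\beta \in \sMC_n(A)$ restricting to $\alpha$ on the horn and lifting $\tilde\beta$ along $\psi^{(n)}_\ast$. Since the cdga restriction $\Omega_n \twoheadrightarrow \Omega_{\horn{n}{k}}$ is surjective with contractible kernel and $\psi^{(n)} \colon A \htensor \Omega_n \twoheadrightarrow B \htensor \Omega_n$ is a levelwise surjection of cochain complexes, one can always find an element $\hat\beta \in (A\htensor \Omega_n)^0$ that simultaneously lifts $\tilde\beta$ and restricts to $\alpha$ on the horn, although it need not be Maurer--Cartan. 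Its curvature $\curv(\hat\beta)$ then lies in $K \htensor \Omega_n$ (since $\psi^{(n)}_\ast(\hat\beta) = \tilde\beta$ is MC) and vanishes on the horn (since $\alpha$ is MC there). The remainder of the proof then consists of iteratively correcting $\hat\beta$ by elements $\gamma \in K\htensor \Omega_n$ vanishing on the horn until $\curv(\hat\beta + \gamma) = 0$, exploiting the acyclicity of $\ker(\Omega_n \twoheadrightarrow \Omega_{\horn{n}{k}})$.

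The main obstacle will be executing this iterative correction rigorously. Although $K$ is abelian as an $L_\infty$-algebra in its own right, the mixed brackets $\{a_1,\ldots,a_j,\gamma\}_{j+1}$ with $a_i \in A$ and $\gamma \in K$ generally do not vanish, so the equation $\curv(\hat\beta + \gamma) = 0$ is nonlinear in $\gamma$. The plan for handling this is to carry out the correction stage by stage through the nilpotent filtration inherited from $A$: at each stage, the remaining error lives modulo a deeper filtration layer, so the correction equation becomes linear modulo that layer and can be solved by applying the explicit contracting homotopy on the kernel of $\Omega_n \twoheadrightarrow \Omega_{\horn{n}{k}}$. The detailed bookkeeping required to verify convergence of this iteration and its compatibility with the horn is precisely the step that mirrors, but differs in detail from, the Maurer--Cartan calculations carried out in our previous work \cite{GM_Theorem}.
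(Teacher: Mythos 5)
The reduction in your first paragraph rests on a false general principle. It is not true that a map of towers of Kan complexes which is a levelwise Kan fibration, and whose structure maps on both sides are Kan fibrations, has a Kan fibration as its inverse limit. A counterexample: take $X_k = K(\Z,1)$ with identity structure maps, $Y_k = K(\Z/2^k,1)$ with the projections, and the levelwise reduction maps $X_k \to Y_k$; each of these is a Kan fibration (surjective on normalized chains in positive degrees), all structure maps are Kan fibrations, yet the limit map $K(\Z,1) \to K(\Z_2,1)$ is not a fibration since $\Z \to \Z_2$ is not surjective. What the right Quillen property of $\varprojlim \maps \tow(\sSet) \to \sSet$ actually requires is that the map of towers be a fibration in the tower category \cite[Ch.\ VI, Def.\ 1.1]{GJ}, i.e., that for each $n$ the relative matching map
\[
\sMC_{\bul}(L/\cF_n L) \longrightarrow \sMC_{\bul}(\ti{L}/\cF_n \ti{L}) \times_{\sMC_{\bul}(\ti{L}/\cF_{n-1}\ti{L})} \sMC_{\bul}(L/\cF_{n-1}L)
\]
is a Kan fibration. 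Your plan never addresses this corner map; if you try to prove even your levelwise statement (that $\sMC_\bul(\Phi_k)$ is a fibration) by induction on $k$ as you suggest, the inductive step is exactly a lifting problem against this same pullback, so the issue cannot be avoided.

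This omission is not peripheral: the corner-map lifting problem is where all of the genuinely new work in the theorem lives, because there the morphism $\ov{\Phi}$ is a general (non-strict) $\infty$-morphism, not a strict surjection. Your ``key claim'' concerns only strict fibrations of nilpotent $L_\infty$-algebras with abelian kernel, which is already covered by Getzler's Prop.\ 4.7 (indeed for arbitrary strict surjections of nilpotent algebras), and your second and third paragraphs essentially re-derive that cited result; but $\Phi_k$ does not factor into such strict maps, and no argument is given for how the non-strict case ``collapses'' to it. The actual proof handles the corner map directly: first lift the simplex $\beta$ through the strict fibration $p^{(\bul)}_{n\ast}$ to a horn-compatible $\theta$, then observe that the discrepancy $\eta = \ov{\Phi}^{(\bul)}_{\ast}(\theta) - \ti{\beta}$ is an $m$-simplex of $\sMC_\bul(\cF_{n-1}\ti{L}/\cF_n\ti{L})$ vanishing on the horn, lift it through the strict abelian fibration $\ov{\cF_{n-1}\Phi}$, and check via explicit Maurer--Cartan estimates that $\alpha = \theta + \lambda$ solves the simultaneous lifting problem. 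To repair your proposal you would need to replace the false limit principle by the tower-fibration criterion and then supply an argument of this kind (or an equivalent one) for the pullback-corner maps, including the compatibility with the non-strict $\ov{\Phi}^{(\bul)}_{\ast}$.
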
 
Two special cases of Thm.\ \ref{thm:fib} already exist in the literature.  
If $\Phi$ happens to be a \textit{strict}
fibration between \textit{nilpotent} $L_\infty$-algebras, then the result is
again due to E.\ Getzler \cite[Prop.\ 4.7]{Getzler}. If $\Phi$ is a \textit{strict}
fibration between \textit{profinite} filtered $L_\infty$-algebra, then
S.\ Yalin showed \cite[Thm.\ 4.2(1)]{Yalin} that $\sMC_{\bul}(\Phi)$ is a fibration.

The proof of Thm.\ \ref{thm:fib} is technical and will appear in
elsewhere in full detail. We give a sketch here. 

Suppose
$\Phi \maps L \to \ti{L}$ is a fibration. This induces a morphism
between towers of nilpotent $L_\infty$-algebras
\begin{equation}\label{eq:L_tow}
\begin{tikzpicture}[descr/.style={fill=white,inner sep=2.5pt},baseline=(current  bounding  box.center)]
\matrix (m) [matrix of math nodes, row sep=2em,column sep=3em,
  ampersand replacement=\&]
  {  
\cdots \&  L / \cF_{n+1}L \& L / \cF_{n}L  \&  L / \cF_{n-1}L \& \cdots \& 0 \\
\cdots \&  \ti{L} / \cF_{n+1}\ti{L} \& \ti{L} / \cF_{n} \ti{L}  \&  \ti{L} / \cF_{n-1} \ti{L} \& \cdots \& 0 \\
}; 

\path[->,font=\scriptsize] 
  (m-1-2) edge node[auto,swap] {$ \ov{\Phi} $} (m-2-2)
  (m-1-3) edge node[auto] {$ \ov{\Phi} $} (m-2-3)
  (m-1-4) edge node[auto] {$ \ov{\Phi} $} (m-2-4)
;

\path[->>,font=\scriptsize] 
(m-1-1) edge node[auto] {$$} (m-1-2)
(m-2-1) edge node[auto] {$$} (m-2-2)
(m-1-4) edge node[auto] {$$} (m-1-5)
(m-2-4) edge node[auto] {$$} (m-2-5)
(m-1-5) edge node[auto] {$$} (m-1-6)
(m-2-5) edge node[auto] {$$} (m-2-6)
(m-1-6) edge node[auto] {$$} (m-2-6)

(m-1-2) edge node[auto] {$p_{n+1}$} (m-1-3)
(m-2-2) edge node[auto] {$\ti{p}_{n+1}$} (m-2-3)
(m-1-3) edge node[auto] {$p_n$} (m-1-4)
(m-2-3) edge node[auto] {$\ti{p}_n$} (m-2-4)
;
\end{tikzpicture}
\end{equation}
which gives us a morphism between towers of Kan complexes:
\begin{equation}\label{eq:S_tow}
\begin{tikzpicture}[descr/.style={fill=white,inner sep=2.5pt},baseline=(current  bounding  box.center)]
\matrix (m) [matrix of math nodes, row sep=2em,column sep=3em,
  ampersand replacement=\&]
  {  
\cdots \&  \sMC_{\bul} \Bigl ( L / \cF_{n+1}L \Bigr) \& \sMC_{\bul} \Bigl ( L / \cF_{n}L \Bigr)   \& \sMC_{\bul} \Bigl (  L / \cF_{n-1}L \Bigr)  \& \cdots \& \ast \\
\cdots \&  \sMC_{\bul} \Bigl ( \ti{L} / \cF_{n+1}\ti{L}  \Bigr) \& \sMC_{\bul} \Bigl (\ti{L} / \cF_{n} \ti{L}  \Bigr)   \&  \sMC_{\bul} \Bigl (\ti{L} / \cF_{n-1} \ti{L}  \Bigr)  \& \cdots \& \ast \\
}; 

\path[->,font=\scriptsize] 
  (m-1-2) edge node[auto,swap] {$ \ov{\Phi}^{(\bul)}_{\ast} $} (m-2-2)
  (m-1-3) edge node[auto] {$ \ov{\Phi}^{(\bul)}_{\ast} $} (m-2-3)
  (m-1-4) edge node[auto] {$ \ov{\Phi}^{(\bul)}_{\ast} $} (m-2-4)
  
;

\path[->>,font=\scriptsize] 
(m-1-1) edge node[auto] {$$} (m-1-2)
(m-2-1) edge node[auto] {$$} (m-2-2)
(m-1-4) edge node[auto] {$$} (m-1-5)
(m-2-4) edge node[auto] {$$} (m-2-5)
(m-1-5) edge node[auto] {$$} (m-1-6)
(m-2-5) edge node[auto] {$$} (m-2-6)
(m-1-6) edge node[auto] {$ $} (m-2-6)

(m-1-2) edge node[auto] {$p_{n+1 \ast}^{(\bul)}$} (m-1-3)
(m-2-2) edge node[auto] {$\ti{p}_{n+1 \ast}^{(\bul)}$} (m-2-3)
(m-1-3) edge node[auto] {$p_{n \ast}^{(\bul)}$} (m-1-4)
(m-2-3) edge node[auto] {$\ti{p}^{(\bul)}_{n \ast}$} (m-2-4)
;
\end{tikzpicture}
\end{equation}

The morphisms $p_n$ and $\ti{p}_n$ are strict fibrations between nilpotent $L_\infty$-algebras. 
Hence, Prop.\ 4.7 of \cite{Getzler} implies that their images under $\sMC_{\bul}(-)$ are fibrations of simplicial sets. The inverse limit $\varprojlim \maps \tow(\sSet) \to \sSet$ of this morphism of towers is $\sMC_{\bul}(\Phi) \maps \sMC_{\bul}(L) \to \sMC_{\bul}(\ti{L})$. The functor $\varprojlim$ is right Quillen \cite[Ch.\ VI, Def.\ 1.7]{GJ}. 
Hence, to show $\sMC_{\bul}(\Phi)$ is a fibration, it is sufficient to show
that the morphism of towers \eqref{eq:S_tow} is a fibration. By
definition, this means we must show, for each $n >1$, that the morphism
induced by the universal property in the pullback diagram:
\[
\begin{tikzpicture}[descr/.style={fill=white,inner sep=2.5pt},baseline=(current  bounding  box.center)]
\matrix (m) [matrix of math nodes, row sep=2em,column sep=3em,
  ampersand replacement=\&]
  {  
\sMC_\bul(L/\cF_n L) \& \sMC_{\bul}(\ti{L}/\cF_n \ti{L}) \times_{\sMC_{\bul}(\ti{L}/\cF_{n-1} \ti{L})} \sMC_{\bul}(L/\cF_{n-1} L) 
\& \sMC_{\bul}(L/\cF_{n-1} L)\\
\& \sMC_{\bul}(\ti{L}/\cF_n \ti{L}) \& \sMC_{\bul}(\ti{L}/\cF_{n-1} \ti{L})\\
}; 
  \path[->,font=\scriptsize] 
   (m-1-1) edge node[auto] {$ (\ov{\Phi}^{(\bul)}_{\ast},p^{(\bul)}_{n \ast} )$} (m-1-2)
   (m-1-2) edge node[auto] {$$} (m-1-3)
   (m-1-2) edge node[auto,swap] {$$} (m-2-2)
   (m-1-3) edge node[auto] {$\ov{\Phi}^{(\bul)}_{\ast}$} (m-2-3)
   (m-2-2) edge node[auto,swap] {$\ti{p}^{(\bul)}_{n\ast}$} (m-2-3)
   (m-1-1) edge [bend left=25] node[auto]{$p^{(\bul)}_{n\ast}$} (m-1-3)
   (m-1-1) edge [bend right=15,swap] node[auto]{$\ov{\Phi}^{(\bul)}_{\ast}$} (m-2-2)

  ;
  \begin{scope}[shift=($(m-1-2)!.4!(m-2-3)$)]
  \draw +(-0.25,0) -- +(0,0)  -- +(0,0.25);
  \end{scope}
\end{tikzpicture}
\]
is a fibration of simplicial sets \cite[Ch.\ VI, Def.\ 1.1]{GJ}.
So suppose we are given a horn  $\gamma \maps   \horn{m}{k} \to \sMC_{\bul}(L/\cF_n L)$ and commuting diagrams
\[
\begin{tikzpicture}[descr/.style={fill=white,inner sep=2.5pt},baseline=(current  bounding  box.center)]
\matrix (m) [matrix of math nodes, row sep=2em,column sep=3em,
  ampersand replacement=\&]
  {  
\horn{m}{k} \& \sMC_{\bul}(L/\cF_n L) \\
\Delta^m \& \sMC_{\bul}(\ti{L}/\cF_n \ti{L})\\
}; 
  \path[->,font=\scriptsize] 
   (m-1-1) edge node[auto] {$\gamma$} (m-1-2)
   (m-1-1) edge node[auto] {$$} (m-2-1)
   (m-2-1) edge node[auto] {$\ti{\beta}$} (m-2-2)
   (m-1-2) edge node[auto] {$\ov{\Phi}^{(\bul)}_{\ast}$} (m-2-2)
  ;
\end{tikzpicture}
\qquad
\begin{tikzpicture}[descr/.style={fill=white,inner sep=2.5pt},baseline=(current  bounding  box.center)]
\matrix (m) [matrix of math nodes, row sep=2em,column sep=3em,
  ampersand replacement=\&]
  {  
\horn{m}{k} \& \sMC_{\bul}(L/\cF_n L) \\
\Delta^m \& \sMC_{\bul}(L/\cF_{n-1} L)\\
}; 
  \path[->,font=\scriptsize] 
   (m-1-1) edge node[auto] {$\gamma$} (m-1-2)
   (m-1-1) edge node[auto] {$$} (m-2-1)
   (m-2-1) edge node[auto] {${\beta}$} (m-2-2)

  ;
  \path[->>,font=\scriptsize] 
   (m-1-2) edge node[auto] {${p}^{(\bul)}_{n\ast}$} (m-2-2)
;
\end{tikzpicture}
\qquad
\begin{tikzpicture}[descr/.style={fill=white,inner sep=2.5pt},baseline=(current  bounding  box.center)]
\matrix (m) [matrix of math nodes, row sep=2em,column sep=3em,
  ampersand replacement=\&]
  {  
\Delta^m \& \sMC_{\bul}(L/\cF_{n-1} L)\\
\sMC_{\bul}(\ti{L}/\cF_n \ti{L}) \& \sMC_{\bul}(\ti{L}/\cF_{n-1} \ti{L})\\
}; 
  \path[->,font=\scriptsize] 
   (m-1-1) edge node[auto] {$\beta$} (m-1-2)
   (m-1-1) edge node[auto,swap] {$\ti{\beta}$} (m-2-1)
   (m-1-2) edge node[auto] {$\ov{\Phi}^{(\bul)}_{\ast}$} (m-2-2)
  ;

  \path[->>,font=\scriptsize] 
   (m-2-1) edge node[auto] {$\ti{p}^{(\bul)}_{n\ast}$} (m-2-2)
;
\end{tikzpicture}
\]
We need to produce an $m$-simplex $\alpha \maps \Delta^m \to \sMC_{\bul}(L/\cF_nL)$ which fills the horn $\gamma$ and satisfies $\ov{\Phi}^{(\bul)}_{\ast} \alpha = \ti{\beta}$ and
${p}^{(\bul)}_{n\ast} \alpha = \beta$. Since $p^{(\bul)}_{n\ast}$ is a fibration, there exists an $m$-simplex $\theta$  lifting $\beta$:
\[
\begin{tikzpicture}[descr/.style={fill=white,inner sep=2.5pt},baseline=(current  bounding  box.center)]
\matrix (m) [matrix of math nodes, row sep=2em,column sep=3em,
  ampersand replacement=\&]
  {  
\horn{m}{k} \& \sMC_{\bul}(L/\cF_n L) \\
\Delta^m \& \sMC_{\bul}(L/\cF_{n-1} L)\\
}; 
  \path[->,font=\scriptsize] 
   (m-1-1) edge node[auto] {$\gamma$} (m-1-2)
   (m-1-1) edge node[auto] {$$} (m-2-1)
   (m-2-1) edge node[auto,swap] {${\beta}$} (m-2-2)
   (m-2-1) edge node[auto] {${\theta}$} (m-1-2)

  ;
  \path[->>,font=\scriptsize] 
   (m-1-2) edge node[auto] {${p}^{(\bul)}_{n\ast}$} (m-2-2)
;
\end{tikzpicture}
\]
but there is no guarantee that $\ov{\Phi}^{(\bul)}_{\ast}(\theta)=\ti{\beta}$. However, note that  
the $m$-simplex 
\begin{equation} \label{eq:eta}
\eta:= \ov{\Phi}^{(\bul)}_{\ast}(\theta)- \ti{\beta}
\end{equation}
of the simplicial vector space $\ti{L}/\cF_n
\ti{L}\tensor\Omega_{\bul}$ lies in the kernel of the linear map
$\ti{p}^{(m)}_{n}$. We now observe that the fibration $\Phi \maps L
\to \ti{L}$ induces a map between the short exact sequences
\eqref{eq:ses} of nilpotent $L_\infty$-algebras:

\begin{equation}
\begin{tikzpicture}[descr/.style={fill=white,inner sep=2.5pt},baseline=(current  bounding  box.center)]
\matrix (m) [matrix of math nodes, row sep=2em,column sep=3em,
  ampersand replacement=\&]
  {  
\cF_{n-1}L/\cF_n L \&  L / \cF_{n}L  \&  L / \cF_{n-1}L \\
\cF_{n-1}\ti{L}/\cF_n \ti{L} \&  \ti{L} / \cF_{n} \ti{L}  \&  \ti{L} / \cF_{n-1} \ti{L} \\
}; 

\path[->,font=\scriptsize] 
  (m-1-1) edge node[auto] {$$} (m-1-2)
  (m-2-1) edge node[auto] {$$} (m-2-2)
  (m-1-2) edge node[auto,swap] {$ \ov{\Phi} $} (m-2-2)
  (m-1-3) edge node[auto] {$ \ov{\Phi} $} (m-2-3)
  (m-1-1) edge node[auto,swap] {$ \ov{\cF_{n-1} \Phi} $} (m-2-1)
;
\path[->>,font=\scriptsize] 
(m-1-2) edge node[auto] {$p_n$} (m-1-3)
(m-2-2) edge node[auto] {$\ti{p}_n$} (m-2-3)
;
\end{tikzpicture}
\end{equation}
It follows from the compatibility of $\Phi$ with the filtrations, that
$\ov{\cF_{n-1} \Phi}$ above is simply the linear term of the morphism
$\ov{\Phi}$ restricted to the subspace $\cF_{n-1}L/\cF_nL$. Moreover, since
$\Phi$ is a fibration, $\ov{\cF_{n-1} \Phi}$ is surjective. Hence,
$\ov{\cF_{n-1} \Phi}$ is a strict fibration between abelian
$L_\infty$-algebras, and so Prop.\ 4.7 of \cite{Getzler} implies that
the corresponding map in the diagram of simplicial sets below is a
fibration:
\begin{equation}
\begin{tikzpicture}[descr/.style={fill=white,inner sep=2.5pt},baseline=(current  bounding  box.center)]
\matrix (m) [matrix of math nodes, row sep=2em,column sep=3em,
  ampersand replacement=\&]
  {  
\sMC_{\bul} \Bigl( \cF_{n-1}L/\cF_n L \Bigr) \&  \sMC_{\bul} \Bigl( L / \cF_{n}L \Bigr)  \&  \sMC_{\bul} \Bigl( L / \cF_{n-1}L \Bigr) \\
\sMC_{\bul} \Bigl( \cF_{n-1}\ti{L}/\cF_n \ti{L} \Bigr) \&  \sMC_{\bul} \Bigl( \ti{L} / \cF_{n} \ti{L} \Bigr)  \&  \sMC_{\bul} \Bigl( \ti{L} / \cF_{n-1} \ti{L} \Bigr) \\
}; 

\path[->,font=\scriptsize] 
  (m-1-1) edge node[auto] {$$} (m-1-2)
  (m-2-1) edge node[auto] {$$} (m-2-2)
  (m-1-2) edge node[auto,swap] {$ \ov{\Phi}^{(\bul)}_{\ast} $} (m-2-2)
  (m-1-3) edge node[auto] {$ \ov{\Phi}^{(\bul)}_{\ast} $} (m-2-3)
;
\path[->>,font=\scriptsize] 
(m-1-2) edge node[auto] {$p^{(\bul)}_{n \ast}$} (m-1-3)
(m-2-2) edge node[auto] {$\ti{p}^{(\bul)}_{n \ast}$} (m-2-3)
(m-1-1) edge node[auto,swap] {$ \ov{\cF_{n-1} \Phi}^{(\bul)}_{\ast} $} (m-2-1)
;
\end{tikzpicture}
\end{equation}
A straightforward calculation shows that the vector $\eta$ \eqref{eq:eta} is in fact a $m$-simplex of
$\sMC_{\bul} \Bigl( \cF_{n-1}\ti{L}/\cF_n \ti{L} \Bigr)$, whose restriction to the horn $\horn{m}{k}$ vanishes.
Hence, there exists a lift $\lambda \maps \Delta^m \to \sMC_{\bul} \Bigl( \cF_{n-1}L/\cF_n L \Bigr)$ of $\eta$ through $\ov{\cF_{n-1} \Phi}^{(\bul)}_{\ast}$. 

One can then show via a series of technical lemmas that
$\alpha=\lambda + \theta$ is a $m$-simplex of $\sMC_{\bul}(L/\cF_nL)$ which
fills the horn $\gamma$ and satisfies both $\ov{\Phi}^{(\bul)}_{\ast}
\alpha = \ti{\beta}$ and ${p}^{(\bul)}_{n\ast} \alpha = \beta$.
Hence, the morphism of towers \eqref{eq:S_tow} is a fibration in $\tow(\sSet)$,
and we conclude that  $\sMC_{\bul}(\Phi) \maps \sMC_{\bul}(L) \to \sMC_{\bul}(\ti{L})$ is a fibration
of simplicial sets.




\section{Homotopy transfer theorem} \label{sec:HTT}
For this section, we follow the conventions presented in Sections 1 and 2 of
\cite{HAforms}. We refer the reader there for further background on dg
operads and homotopy algebras.
Let $\cC$ be a dg cooperad with a co-augmentation $\bar{\cC}$ that is equipped with 
a compatible cocomplete ascending filtration:
\begin{equation}
\label{eq:cooperad_filtr}
0 =  \cF^0 \bar{\cC} \subset \cF^1 \bar{\cC} \subset  \cF^2 \bar{\cC} \subset  \cF^3 \bar{\cC} \subset \dots 
\end{equation}
Any co-augmented cooperad satisfying $\cC(0)=0$, $\cC(1)=\Bbbk$, for example, admits 
such a filtration (by arity). $\Cobar(\cC)$ algebra structures on a cochain
complex $(A,\del_A)$ are in one--to--one correspondence with
codifferentials $Q$ on the cofree coalgebra $\cC(A)=\bigoplus_{n \geq
  0}\Bigl(\cC(n)\tensor A^{\tensor n} \Bigr)_{S_n}$ which satisfy
$Q\vert_{A}=\del$. Homotopy algebras such as $L_\infty$, $A_\infty$, and $C_\infty$ algebras
are all examples of $\Cobar(\cC)$ algebras of this kind. 
A morphism (or more precisely ``$\infty$-morphism'')
$F \maps (A,Q_A) \to (B,Q_B)$ between  $\Cobar(\cC)$ algebras is morphism
between the corresponding dg coalgebras $F \maps 
\bigl( \cC(A), \del_A + Q_A \bigr) \to  \bigl( \cC(B), \del_B + Q_B
\bigr)$. Such a morphism is an {\bf $\infty$-quasi-isomorphism} iff
its linear term $\pr_B F \vert_A \maps (A,\del_A) \to (B,\del_B)$ is a
quasi--isomorphism of chain complexes. 

Given a cochain complex $(A,\del_A)$, one can construct a dg Lie
algebra $\Conv(\bar{\cC},\End_A)$ whose Maurer--Cartan elements are in
one--to--one correspondence with $\Cobar(\cC)$ structures on $(A,\del_A)$.
The underlying complex of $\Conv(\bar{\cC},\End_A)$ can be identified
with the complex of linear maps $\Hom(\bar{\cC}(A),A)$.
The filtration \eqref{eq:cooperad_filtr} induces a complete descending
filtration on $\Conv(\bar{\cC},\End_A)$ which is compatible with the
dg Lie structure. Hence, the desuspension $\bs^{-1}
\Conv(\bar{\cC},\End_A)$ is a filtered $L_\infty$-algebra in our sense.

We now indulge in some minor pedantry by presenting the
well-known Homotopy Transfer Theorem in the following way.
Let $(B,Q_B)$ be a $\Cobar(\cC)$-algebra, $(A,\del)$ a cochain
complex, and $\phi \maps A \to B$ a quasi--isomorphism of cochain
complexes. One asks whether the structure on $B$ can be transferred
through $\phi$ to a homotopy equivalent structure on $A$. A solution
to the \textbf{homotopy transfer problem} is a 
$\Cobar(\cC)$-structure $Q_A$ on $A$, and
a $\infty$-quasi-isomorphism $F \maps (A,Q_A) \weq (B,Q_B)$ of
$\Cobar(\cC)$-algebras such that $\pr_BF \vert_A =\phi$.


Solutions to the homotopy transfer problem correspond to certain MC
elements of a filtered $L_\infty$--algebra. The cochain complex
\begin{equation} \label{eq:cyl} \Cyl(\cC,A,B):=
  \bs^{-1}\Hom(\bar{\cC}(A),A) \oplus \Hom(\cC(A),B) \oplus
\bs^{-1}  \Hom(\bar{\cC}(B),B)
\end{equation}
can be equipped with a (shifted) $L_\infty$-structure induced by:
(1) the convolution Lie brackets on $\Hom(\bar{\cC}(A),A)$ and
$\Hom(\bar{\cC}(B),B)$, and (2) pre and post composition of elements
of $\Hom(\cC(A),B)$ with elements of $\Hom(\bar{\cC}(A),A)$ and
$\Hom(\bar{\cC}(B),B)$, respectively. (See Sec.\ 3.1 in \cite{DW} for
the details.)


As shown in Sec.\ 3.2 of \cite{DW},
the $L_\infty$-structure on $\Cyl(\cC,A,B)$ is such that its
MC elements are triples $(Q_A,F,Q_B)$, where $Q_A$ and $Q_B$ are
$\Cobar(\cC)$ structures on $A$ and $B$, respectively, and $F$ is a
$\infty$-morphism between them. In particular, if $\phi \maps A \to B$ is a
chain map, then $\alpha_\phi=(0,\phi,0)$ is a MC element in
$\Cyl(\cC,A,B)$, where ``0'' denotes the trivial $\Cobar(\cC)$ structure.

We can therefore twist, as described in Sec.\ \ref{sec:MC},  by the MC element $\alpha_\phi$ 
to obtain a new
$L_\infty$-algebra $\Cyl(\cC,A,B)^{\alpha_\phi}$. The graded subspace
\begin{equation}\label{eq:sub_cyl}
  \ov{\Cyl}(\cC, A, B)^{\alpha_\phi} : = \bs^{-1} \Hom(\bar{\cC}(A), A) ~\oplus~ \Hom(\bar{\cC}(A), B)~ \oplus~ \bs^{-1} \Hom(\bar{\cC}(B), B)
\end{equation}
is equipped with a filtration induced by the filtration on
$\cC$. Restricting the $L_\infty$ structure on ${\Cyl}(\cC, A, B)^{\alpha_\phi}$ to
$\ov{\Cyl}(\cC, A, B)^{\alpha_\phi}$ makes the latter into a  filtered $L_\infty$-algebra.
The MC elements of $\ov{\Cyl}(\cC, A, B)^{\alpha_\phi}$ are those MC
elements $(Q_A,F,Q_B)$ of $\Cyl(\cC,A,B)$ such that $\pr_B F \vert_A = \phi$.

We have the following proposition. (See Prop.\ 3.2 in \cite{DW}).

\begin{proposition} \label{prop:proj}
The canonical projection of cochain complexes
\begin{equation} \label{eq:proj1}
\pi_B \maps \bs^{-1} \Hom(\bar{\cC}(A), A) ~\oplus~ \Hom(\bar{\cC}(A),
B)~ \oplus~ \bs^{-1} \Hom(\bar{\cC}(B), B) \to \bs^{-1} \Hom(\bar{\cC}(B), B)
\end{equation}
lifts to a (strict) acyclic fibration of filtered $L_\infty$-algebras:
\[
\pi_B \maps \ov{\Cyl}(\cC, A, B)^{\alpha_\phi} \trivfib \bs^{-1} \Conv(\bar{\cC},\End_B)
\]
\end{proposition}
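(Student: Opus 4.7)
The plan is to verify separately the three conditions required for $\pi_B$ to be a strict acyclic fibration in $\FLie$: that it is a strict $L_\infty$-morphism, that it is filtration-preserving and surjective on every filtration level, and that its linear term is a quasi-isomorphism on every filtration level (Definition \ref{def:weq}). For strictness, I would work directly from the description of the $L_\infty$-structure on $\Cyl(\cC, A, B)$, whose brackets are generated by the two convolution brackets on $\bs^{-1}\Hom(\bar{\cC}(A), A)$ and $\bs^{-1}\Hom(\bar{\cC}(B), B)$ together with pre-/post-composition involving elements of $\Hom(\cC(A), B)$. Any bracket with at least one argument drawn from the first two summands produces an output that again lies in the first two summands, so $\pi_B$ annihilates it; the bracket of two elements of the third summand is the convolution bracket, which matches the $L_\infty$-structure on $\bs^{-1}\Conv(\bar{\cC},\End_B)$. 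Twisting by $\alpha_\phi = (0, \phi, 0)$ only modifies the differential, so the higher-bracket analysis is unaffected. Filtration-preservation is immediate from the fact that both filtrations are induced by \eqref{eq:cooperad_filtr}, and the fibration property on each $\cF_n$ holds because $\pi_B$ is a projection onto a direct summand that is compatible with the filtration by summands.

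The main step is to show that $\pi_B$ is a weak equivalence in the sense of Definition \ref{def:weq}(1), equivalently that its kernel
\[
K := \bs^{-1}\Hom(\bar{\cC}(A), A) \oplus \Hom(\bar{\cC}(A), B)
\]
with the restriction of the twisted differential $\del^{\alpha_\phi}$ is acyclic at every filtration level $\cF_n K$. A direct computation of $\{\alpha_\phi, -\}$ shows that, modulo the convolution differentials on each summand, the twisted differential on $K$ is exactly the mapping-cone differential of the cochain map
\[
\phi_\ast \maps \bs^{-1}\Hom(\bar{\cC}(A), A) \to \Hom(\bar{\cC}(A), B), \quad a \mapsto \phi \circ a.
\]
Passing to the associated graded for the induced filtration, the map $\phi_\ast$ on the $n$-th graded piece becomes $\Hom\bigl(\cF^n\bar{\cC}(A) / \cF^{n-1}\bar{\cC}(A), \phi\bigr)$. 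Because $\phi$ is a quasi-isomorphism and each subquotient $\cF^n\bar{\cC}(A)/\cF^{n-1}\bar{\cC}(A)$ is a free graded $\Bbbk$-module, this induced map is itself a quasi-isomorphism, and hence its mapping cone is acyclic. A finite induction up the filtration, using the short exact sequences of the form \eqref{eq:ses} for $K$, then promotes acyclicity on the associated graded to acyclicity on $\cF_n K$ for every $n$. Applying the long exact sequence in cohomology to
\[
0 \to \cF_n K \to \cF_n \ov{\Cyl}(\cC, A, B)^{\alpha_\phi} \xrightarrow{\pi_B} \cF_n \bs^{-1}\Conv(\bar{\cC},\End_B) \to 0
\]
yields that $\pi_B\vert_{\cF_n}$ is a quasi-isomorphism, completing the verification.

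The main obstacle is the careful sign and shift bookkeeping required to identify $\del^{\alpha_\phi}\vert_K$ with a genuine mapping-cone differential — in particular, isolating which summands of the twisted differential encode the convolution differentials on $A$ and $B$ and which encode the cone map $\phi_\ast$, and verifying that the cross-terms between the first two summands vanish because of the specific form $\alpha_\phi = (0, \phi, 0)$. Once that identification is in place, the acyclicity argument on each filtration piece proceeds by standard homological algebra.
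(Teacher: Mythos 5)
The paper itself does not prove Proposition \ref{prop:proj}; it defers to Prop.\ 3.2 of \cite{DW}. Your argument supplies essentially that proof: the key identification, namely that the kernel of $\pi_B$ with the twisted differential $\del^{\alpha_\phi}$ is the mapping cone of post-composition $\Hom(\bar{\cC}(A),\phi)$, together with the observation that every bracket having an argument in the first two summands has vanishing component in $\bs^{-1}\Hom(\bar{\cC}(B),B)$, is exactly the right mechanism, and your treatment of the fibration condition is fine. Two points need repair, though neither is fatal. First, the sentence ``twisting by $\alpha_\phi$ only modifies the differential'' is false as stated: by \eqref{eq:Linft_twisted} the twist changes \emph{all} the brackets, since $\{\alpha_\phi,\dots,\alpha_\phi,v_1,\dots,v_m\}$ can be nonzero for $m\geq 2$ (e.g.\ inserting copies of $\phi$ into a bracket with arguments from the third summand). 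What is true, and what you actually need, is that all such extra terms land in the first two summands, so the third component of every twisted bracket agrees with the untwisted convolution bracket of the third components; strictness of $\pi_B$ follows from that, not from the twist leaving the higher brackets untouched.

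Second, the passage from acyclicity of the associated graded to acyclicity of $\cF_n K$ cannot be a ``finite induction'': the filtration \eqref{eq:cooperad_filtr} is in general infinite, so induction using \eqref{eq:ses} only gives acyclicity of the finite-stage quotients $\cF_nK/\cF_mK$. To conclude for $\cF_nK$ itself you must invoke completeness of the filtration --- either by a successive-approximation argument (write a cocycle as a coboundary modulo one filtration step higher, iterate, and use completeness to sum the resulting series), or by passing to $\varprojlim_m \cF_nK/\cF_mK$ and controlling $\varprojlim^1$ via the surjectivity of the transition maps. Alternatively, over a field of characteristic zero you can bypass the filtration argument entirely: $\phi$ is a chain homotopy equivalence, hence so is $\Hom(C,\phi)$ for any complex $C$, in particular on each filtration level $\cF_n$; the cone is then contractible levelwise and the long exact sequence finishes the proof as you indicate. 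With either repair your argument is complete and is the same in substance as the one in \cite{DW}.
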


We can now express the homotopy transfer theorem as a simple corollary:
\begin{corollary}[Homotopy Transfer Theorem]
  Let $(B,Q_B)$ be a $\Cobar(\cC)$-algebra, $(A,\del)$ a cochain
  complex, and $\phi \maps A \to B$ a quasi--isomorphism of cochain
  complexes. The solutions to the corresponding homotopy
  transfer problem are in one--to--one correspondence with 
the objects of a  sub $\infty$--groupoid
\[  
\fF_{Q_B} \subseteq \sMC_{\bul}\Bigl ( \ov{\Cyl}(\cC, A, B)^{\alpha_\phi} \Bigr).
\]
Furthermore,
\begin{enumerate}
\item (existence) $\fF_{Q_B}$ is non--empty, and

\item (uniqueness) $\fF_{Q_B}$ is contractible.
\end{enumerate}
\end{corollary}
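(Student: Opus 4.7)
The plan is to exhibit $\fF_{Q_B}$ as the strict fiber of the simplicial map obtained by applying $\sMC_{\bul}(-)$ to the acyclic fibration $\pi_B$ from Prop.\ \ref{prop:proj}, and then read the corollary off Thms.\ \ref{thm:GM} and \ref{thm:fib}.

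First I would unpack the twist recipe from Sec.\ \ref{sec:MC}: MC elements of a twisted algebra $L^{\alpha}$ are in bijection with elements $\beta \in L^0$ such that $\alpha + \beta \in \MC(L)$. Applied to $\alpha_\phi = (0,\phi,0)$ and the decomposition \eqref{eq:sub_cyl}, this identifies the 0-simplices of $\sMC_{\bul}\bigl(\ov{\Cyl}(\cC, A, B)^{\alpha_\phi}\bigr)$ with triples $(Q_A, F, Q_B')$, where $Q_A$ and $Q_B'$ are $\Cobar(\cC)$-structures on $A$ and $B$ and $F \maps (A,Q_A) \to (B, Q_B')$ is an $\infty$-morphism satisfying $\pr_B F\vert_A = \phi$. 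The strict morphism $\pi_B$ is the projection onto the $B$-component, so I would define
\[
\fF_{Q_B} := \sMC_{\bul}\bigl(\ov{\Cyl}(\cC, A, B)^{\alpha_\phi}\bigr) \times_{\sMC_{\bul}(\bs^{-1}\Conv(\bar{\cC},\End_B))} \Delta^0,
\]
the pullback along $\sMC_{\bul}(\pi_B)$ and the vertex classifying $Q_B$. By construction, $\fF_{Q_B}$ is a sub-simplicial set of $\sMC_{\bul}\bigl(\ov{\Cyl}(\cC, A, B)^{\alpha_\phi}\bigr)$ whose 0-simplices are exactly the triples $(Q_A, F, Q_B)$ with fixed third entry, i.e.\ precisely the data of a solution to the homotopy transfer problem determined by $(B,Q_B)$ and $\phi$.

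Prop.\ \ref{prop:proj} states that $\pi_B$ is both a weak equivalence and a fibration in $\FLie$, so Thms.\ \ref{thm:GM} and \ref{thm:fib} together imply that $\sMC_{\bul}(\pi_B)$ is an acyclic Kan fibration of simplicial sets. Acyclic Kan fibrations are stable under pullback, so $\fF_{Q_B} \to \Delta^0$ is itself an acyclic Kan fibration. In particular, it is surjective on 0-simplices (by the right lifting property against $\ini \hookrightarrow \Delta^0$), which yields \emph{existence}; and it is a weak equivalence to a point, which yields contractibility and thus \emph{uniqueness}. The only substantive input is Thm.\ \ref{thm:fib}; once that is in hand the deduction is entirely formal, and I would not anticipate any further obstacle.
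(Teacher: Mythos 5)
Your proposal is correct and follows essentially the same route as the paper: $\fF_{Q_B}$ is defined as the fiber of $\sMC_{\bul}(\pi_B)$ over the vertex $Q_B$, its objects are identified with solutions $(Q_A,F,Q_B)$ of the transfer problem via the twisting description of $\ov{\Cyl}(\cC,A,B)^{\alpha_\phi}$, and existence follows from the right lifting property of the acyclic fibration against $\emptyset \subseteq \Delta^0$. The only (immaterial) difference is that you deduce contractibility from pullback-stability of acyclic Kan fibrations, whereas the paper invokes the long exact sequence of homotopy groups of the fibration.
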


\begin{proof}
All statements follow from Theorems \ref{thm:GM} and \ref{thm:fib}, which imply that 
\begin{equation}\label{eq:cor1}
\sMC_{\bul}(\pi_B) \maps \sMC_{\bul}\Bigl ( \ov{\Cyl}(\cC, A, B)^{\alpha_\phi} \Bigr)
\trivfib 
\sMC_{\bul} \Bigl (\Conv(\bar{\cC},\End_B) \Bigr)
\end{equation}
is an acyclic fibration of Kan complexes. Indeed, 
we define $\fF_{Q_B}$ as the fiber of $\sMC_{\bul}(\pi_B)$ over the object
$Q_B \in \sMC_{0} \Bigl (\Conv(\bar{\cC},\End_B) \Bigr)$.
Since $\sMC_{\bul}(\pi_B)$ is a Kan fibration, $\fF_{Q_B}$ is a
$\infty$-groupoid. Objects of
$\fF_{Q_B}$ are those MC elements of $\ov{\Cyl}(\cC, A, B)^{\alpha_\phi}$
which are of the form $(Q_A,F,Q_B)$, and hence are solutions to the
homotopy transfer problem.

Since $\sMC_{\bul}(\pi_B)$ is an acyclic fibration, it satisfies the right
lifting property with respect to the inclusion $\emptyset=\del
\Delta^0 \subseteq \Delta^0$. Hence, $\sMC_{\bul}(\pi_B)$ is surjective on
objects. This proves statement (1). Statement (2) follows
from the long exact sequence of homotopy groups.

\end{proof}

Let us conclude by mentioning the difference between the above formulation of the
Homotopy Transfer Theorem and the one given in Section 5 of our
previous work \cite{HAforms} with V.\ Dolgushev. There we only had
Thm. \ref{thm:GM} to use, and not Thm.\ \ref{thm:fib}. Hence,
we proved a slight variant of the transfer theorem \cite [Thm.\ 5.1]{HAforms}.
We defined a solution to the homotopy transfer problem as a triple
$(Q_A,F,\ti{Q}_B)$, where $Q_A$ is a $\Cobar(\cC)$ algebra structure
on $A$, $\ti{Q}_B$ is a $\Cobar(\cC)$ algebra structure on $B$ homotopy
equivalent to the original structure $Q_B$, and $F \maps (A,Q_A) \to
(B,\ti{Q}_B)$ is a $\infty$-quasi-isomorphism whose linear term is
$\phi$. We used the fact that $\sMC_{\bul}(\pi_B)$ is a weak equivalence, and
therefore gives a bijection
 \[
 \pi_0 \left (\sMC_{\bul}\Bigl ( \ov{\Cyl}(\cC, A, B)^{\alpha_\phi} \Bigr)
 \right) \cong \pi_0 \left( \sMC_{\bul} \Bigl (\Conv(\bar{\cC},\End_B) \Bigr) \right),
 \]
to conclude that such a solution $(Q_A,F,\ti{Q}_B)$ exists. It is easy
to see that objects of the \textit{homotopy fiber} of $\sMC_{\bul}(\pi_B)$ over the
vertex $Q_B$ are pairs consisting of a solution $(Q_A,F,\ti{Q}_B)$
to this variant of the transfer problem, and an equivalence from $\ti{Q}_B$ to $Q_B$.

\begin{acknowledgement}
I would like to acknowledge support by an AMS--Simons Travel Grant, and I thank Vasily Dolgushev and Bruno Vallette for helpful discussions regarding this work.
I would also like to thank the organizers of the MATRIX Institute program ``Higher Structures in Geometry and Physics'' for an excellent workshop and conference.

\end{acknowledgement}


\end{document}